\title{On the Surjectivity of Galois Representations Associated to Elliptic Curves over Number Fields}
\author{Eric Larson and Dmitry Vaintrob}
\date{}
\begin{document}
\maketitle

\begin{abstract}
Given an elliptic curve $E$ over a number field $K$, the $\ell$-torsion
points $E[\ell]$ of $E$
define a Galois representation $\gal(\bar{K}/K)\to \gl_2(\ff_\ell)$.
A famous theorem of
Serre \cite{serre} states that as long as
$E$ has no Complex Multiplication (CM), the
map $\gal(\bar{K}/K)\to \gl_2(\ff_\ell)$ is surjective for all but
finitely many $\ell$.

We say that a
prime number $\ell$ is \emph{exceptional} (relative to the pair $(E,K)$) if
this map is \emph{not} surjective. Here we give a new bound on 
the largest exceptional prime, as well as on the product of
all exceptional primes of $E$. We show in particular that
conditionally on the Generalized Riemann Hypothesis (GRH),
the largest exceptional prime of an elliptic curve $E$
without CM is no larger
than a constant (depending on~$K$) times $\log N_E$,
where $N_E$ is the absolute value of the norm of the conductor.
This answers affirmatively a question of Serre in \cite{quelques}.
\end{abstract}

\section{Introduction}
Let $E$ be an elliptic curve over a number field $K$, and for
each prime number $\ell$, let $E[\ell]$ be the group of $\ell$-torsion
points of $E$ over $\bar{K}$. This group is isomorphic to 
$\left(\zz/\ell\zz\right)^2$ and has action by the absolute Galois group
$G_K: = \gal(\bar{K}/K)$, which we denote
\[\rho_{E,\ell}\colon G_K \to \gl(E[\ell]) \simeq \gl_2(\ff_\ell).\]
The collection of representations $\rho_{E,\ell}$ encode many important properties
of $E$, such as its primes of bad reduction
and its number of points over finite fields.

As long as $E$ has no complex multiplication (CM),
these representations are surjective for all but finitely
many $\ell$, which we call \emph{exceptional primes}
for $E$. This result was proven in Serre's 1968 paper \cite{serre}, and
concluded the proof of the long-conjectured Open Image Theorem --- the
statement that the inverse limit of the images
\[\varprojlim_{m\in\zz}\rho_{E,m}(G_K)\subset \varprojlim_m\gl_2(\zz/m\zz)\]
has finite index in 
$\underset{m}{\varprojlim}\gl_2(\zz/m\zz) \cong \gl_2(\hat{\zz})$.

Serre's original proof was ineffective, even over the ground field $\qq$. 
But in the later paper \cite{quelques}, he gave in the case
of $K=\qq$ an explicit upper bound
on the largest exceptional prime of an elliptic curve $E$ over
the rational numbers without CM, conditionally
on the Generalized Riemann Hypothesis (GRH).
Namely he showed that the largest
exceptional prime $\ell_E$ is
bounded by the following expression in the conductor $N_E$
of the elliptic curve:
\begin{align}\label{serreq}
\ell_E \leq  C_1 \cdot \log N_E \cdot (\log\log N_E)^3, 
\end{align}
for $C_1$ an absolute (and effectively computable) constant.
In the same paper, he conjectured
that, conditionally on GRH, a similar bound should hold 
for elliptic curves defined over arbitrary fields $K$. 

An effective bound over arbitrary number fields $K$ was later given,
unconditionally, by the paper of Masser and 
W\"ustholz \cite{mw}, with bound $C_2 \cdot \max(h_E, n_K)^\gamma$
for absolute constants $C_2$ and $\gamma$,
where $h_E$ is the logarithmic height of the $j$-invariant
of $E$ and $n_K$ is the degree of $K$.
Here, the constant $\gamma$ is very large (although it can
be reduced to $2$ if we only care about bounding degrees of isogenies).
Our results imply that conditionally on GRH, we can take $\gamma = 1$
if we let $C_2$ depend on $K$.

Over $\qq$, Kraus and Cojocaru (\cite{kraus} and \cite{cojocaru})
gave another unconditional bound in terms of the conductor
using the modularity of elliptic curves over $\qq$, namely
\[\ell_E \leq C_3 \cdot N_E \cdot (\log \log N_E)^{1/2}.\]
Moreover,
in \cite{zywina}, Zywina shows that the product
\[A_E:=\prod_{\ell\text{ exceptional for }E}\ell\]
can be bounded by the $b_E$th power of each of the
above bounds on $\ell_E$, where $b_E$ is the number of
primes of bad reduction for $E$.

The gradual improvements in the bound on exceptional primes
have paid off. 
A recent paper of Bilu and Parent \cite{bilu}
which made a breakthrough in the search for a uniform
bound on exceptional primes over $\qq$ 
(showing that $X_{\text{split}}(\ell)(\qq)$
consists only of CM points and cusps for $\ell$ sufficiently large)
relied crucially the value of $\gamma$
appearing in the Masser-W\"ustholz bound.

This paper continues this tradition. We bound, conditionally
on GRH, both the largest exceptional prime $\ell_E$ and the product
of all exceptional primes $A_E$. Our proof is in the spirit of Serre's
original bound in \cite{quelques}, but we allow 
$E$ to be defined over an arbitrary number field $K$,
which entails a more delicate analysis.
The bound on the largest exceptional prime 
we get is, as conjectured in 
\cite{quelques}, the same as what Serre
obtained when $K = \qq$ (equation \eqref{serreq},
with the constant $C_1$ replaced by a constant $C(K)$ 
depending on the number field $K$).

In fact, for fixed ground field $K$,
we show that an asymptotically
better bound holds. Namely, conditionally on GRH, the
largest exceptional prime $\ell_E$ satisfies
\[\ell_E\le C'(K) \cdot \log N_E,\]
where $N_E$ is the absolute value of the norm of the conductor of $E$.

We make the constant $C(K)$ in our first bound effective,
but have at the moment no
effective way of determining the constant $C'(K)$ in
the second, asymptotically better bound (even over $K=\qq$).

We also give a conditional bound on the product of all exceptional 
primes, $A_E$. We show, in particular, that for fixed $K$ and
fixed $\epsilon > 0$,
we have 
$A_E<N_E^{\epsilon}$ for all but finitely many curves $E$. 
The bound one would get
by multiplying together all primes up to our upper bound for $\ell_E$ --- as
well as bounds on $A_E$ given in earlier papers --- 
give values which are asymptotic to a positive power of $N_E$.

\medskip

\textit{\textbf{For the remainder of the paper, we assume the Generalized
Riemann Hypothesis (GRH).}}

\medskip

Our proof can be roughly outlined as follows. 
First we compare an exceptional prime $\ell$
and an \emph{unexceptional} prime $p$, and show that the two
Galois representations $\rho_{E,\ell}$ and $\rho_{E,p}$ impose
conditions on traces of Frobenius of $E$ which are incompatible
if $\ell$ is sufficiently large compared to $p$ and $N_E$. This part relies on
the effective Chebotarev Theorem of Lagarias and Odlyzko together with
a result of our earlier paper \cite{ourfirstpaper}.

Next, we give an upper bound for the \emph{smallest unexceptional}
prime $p$. The analysis here bifurcates. Ineffectively,
it can be easily shown that the smallest such $p$ is bounded above
by a constant depending only on $K$. The effective
bound is trickier, and uses Serre's method in \cite{quelques},
which depends on GRH in an essential way.

Combining the bound on the unexceptional $p$ 
with the bound on the exceptional $\ell$ in
terms of $p$ completes the proof. We then show that 
the bound on $\ell$ can be tweaked to give an upper bound
on the product $A_E$ of all exceptional primes.
Throughout the
paper, we treat separately two different kinds of exceptional
primes: those for which 
$\rho_{E,\ell}$ is absolutely irreducible, and those for which
it is not. While the analysis in the two cases
is remarkably parallel, our bound on the product of
exceptional primes $\ell$ of the second kind (such that
$\rho_{E,\ell}$ is reducible over $\bar{\ff}_\ell$) turns out to
be significantly better, polynomial in $\log N_E$
(see Lemma \ref{lm:redone}).


\medskip

Fix a number field $K$, and write
$n_K$, $r_K$, $R_K$, $h_K$, and $\Delta_K$ for the degree,
rank of the unit group, regulator, class number, and discriminant of $K$
respectively.
Let us choose for every prime ideal $v$ of $K$,
a corresponding Frobenius element $\pi_v \in G_K := \gal(\bar{K}/K)$.
We let $E$ be an elliptic curve \emph{without complex multiplication (CM)},
and we write $N_E$ and $a_E$
for the absolute value of the norm of the conductor of $E$, and
the number of primes of additive reduction of $E$, respectively.
We say that $X \llk Y$ if there are \emph{effectively computable}
constants $A$ and $B$
depending only on $K$ for which $X \leq AY + B$.
Moreover, we say that $X \lllk Y$
if $X \leq AY + B$ for constants $A$ and $B$ that
are \emph{not} assumed to be effectively computable.
If the constants $A$ and $B$ are absolute, we drop the $K$
subscript on the $\ll$ and $\lll$.
With this notation, our results are as follows.

\begin{thm}[Theorem~\ref{ineffective}] \label{ithm:ineffective}
Let $E$ be an elliptic curve over a number field $K$ without CM.
Then any exceptional prime $\ell$ satisfies
\[\ell \lllk \log N_E.\]
Moreover, the product of all exceptional primes satisfies
\[\prod \ell \lllk 4^{a_E} \cdot (\log N_E)^{14}.\]
\end{thm}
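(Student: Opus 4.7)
The plan is to follow the three-step strategy outlined in the introduction.
First, I would produce, for every non-CM curve $E/K$, an \emph{unexceptional} prime $p$ with $p \lllk 1$ (a constant depending only on $K$).
Ineffectively, this amounts to a finiteness statement: an exceptional prime $\ell$ for $E$ corresponds to a non-CM, non-cuspidal $K$-point on one of the modular curves $X_0(\ell)$, $X_s^+(\ell)$, $X_{ns}^+(\ell)$, or on a modular curve parameterizing exceptional image. A Mazur-type finiteness result over the fixed field $K$ (or a moduli-space compactness argument) furnishes a constant $p_0=p_0(K)$ such that at least one prime $p \leq p_0$ is unexceptional for every such~$E$.

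Second, I would use this $p$ to constrain any exceptional $\ell$. Under GRH, the Lagarias--Odlyzko effective Chebotarev theorem produces, inside any chosen nontrivial conjugacy class of $\gal(K(E[p])/K) \subseteq \gl_2(\ff_p)$, a Frobenius element $\pi_v$ at a prime $v$ of good reduction with $\log Nv \lllk \log(p \cdot N_E) \lllk \log N_E$ (absorbing $p \lllk 1$). If $\ell$ is exceptional, the image of $\rho_{E,\ell}$ lies in a proper maximal subgroup of $\gl_2(\ff_\ell)$ (Borel, normalizer of a Cartan, or exceptional), each of which constrains the trace $\tr\rho_{E,\ell}(\pi_v) \pmod\ell$ to a small set. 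Comparing this to the known value $a_v=\tr \rho_{E,p}(\pi_v) \in \zz$ yields a congruence $\ell \mid F(a_v, Nv)$ for a small polynomial $F$ that is nonzero by the input from \cite{ourfirstpaper} (which lets us choose the Frobenius class to avoid the vanishing locus of $F$). Invoking Hasse's bound $|a_v| \leq 2\sqrt{Nv}$ bounds $|F(a_v,Nv)|$ polynomially in $Nv$, giving $\ell \lllk \log N_E$.

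For the second assertion (bounding $A_E$), I would split the exceptional primes according to whether $\rho_{E,\ell}$ is absolutely irreducible. The reducible case is handled separately by the sharper estimate of Lemma~\ref{lm:redone}, which already gives a polynomial-in-$\log N_E$ bound on the subproduct. For the absolutely irreducible case, I would rerun Step 2 in "aggregate" form: rather than producing one Frobenius per $\ell$, I use a single Frobenius to produce one integer $F(a_v, Nv)$ that is simultaneously divisible by all exceptional primes of a given subgroup type, and then control its size via Hasse. The factor $4^{a_E}$ records the number of combinatorial choices (essentially, the possible local behaviors at the $a_E$ primes of additive reduction) required to guarantee nonvanishing of $F$ at the chosen Frobenius.

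The main obstacle is the product bound in Step 3. The single-prime bound already follows from Serre's method adapted to arbitrary $K$, but to avoid losing an exponential factor when multiplying over all exceptional $\ell$, one must arrange the polynomial $F$ uniformly across the Borel, split and non-split Cartan, and exceptional-subgroup cases, and track carefully how many factors each case contributes; the exponent $14$ will emerge from this bookkeeping, and the $4^{a_E}$ from the separate local analysis at additive-reduction primes, which is the most delicate ingredient.
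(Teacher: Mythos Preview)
Your three-step outline matches the paper's architecture (Lemma~\ref{inun} for Step~1, Lemmas~\ref{lm:redone} and~\ref{vexc} for Steps~2--3), and your Step~1 via Faltings on $X_0(p)$, $X_{\text{split}}(p)$, $X_{\text{nonsplit}}(p)$ is exactly what the paper does. One small correction: the irregular ($A_4$, $S_4$, $A_5$) case is not handled but \emph{excluded} a priori for acceptable primes (Lemma~\ref{rednorm}), so you need not ``arrange $F$ uniformly'' across that case.

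The genuine gap is in your product bound for the absolutely irreducible (normalizer) case. A single Frobenius cannot work: different normalizer-type primes $\ell$ come with different quadratic characters $\chi_\ell$, and $\ell \mid \tr_E(\pi_v)$ only when $\chi_\ell(\pi_v)=-1$, so no one $v$ catches them all. The paper's device (Lemma~\ref{vexc}) is to let $V$ be the $\ff_2$-span of all such $\chi_\ell$ (these are ramified only at additive places, so $\dim V \llk a_E$ by Lemma~\ref{lmv}), choose one Frobenius $v_\alpha$ for each nonzero $\alpha\in V^*$ via Chebotarev applied to $\rho_V\times\prho_{E,p}$, and note that every $\ell$ divides exactly half of the $2^d-1$ resulting traces; taking the $2^{d-1}$-th root of the product yields the bound. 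The factor $4^{a_E}$ is thus $(2^d)^{2-2^{1-d}}$ coming from the degree of the extension cut out by $\rho_V$ in the Chebotarev estimate, not a direct count of local behaviors. The exponent $14=12+2$ is then $(\log N_E)^{12}$ from the reducible side (the $12$th Adams operation in Lemma~\ref{lm:redone}) times $(\log N_E)^2$ from the normalizer side.
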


\begin{thm}[Theorem~\ref{effective}] \label{ithm:effective}
Let $E$ be an elliptic curve over a number field $K$ without CM.
Then any exceptional prime $\ell$ satisfies
\[\ell \llk \log N_E \cdot (\log \log N_E)^3.\]
Moreover, the product of all exceptional primes satisfies
\[\prod \ell \llk 4^{a_E} \cdot (\log N_E)^{14} \cdot (a_E + \log \log N_E)^6 \cdot (\log \log N_E)^{36}\llk 4^{a_E} \cdot (\log N_E)^{21},\]
where $a_E$ is the number of primes of $K$ of additive reduction for $E$.
\end{thm}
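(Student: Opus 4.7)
My plan is to follow the two-step strategy sketched in the introduction. The first step is to establish, for every \emph{unexceptional} prime $p$, an implication of the form
\[\ell\text{ exceptional}\ \Longrightarrow\ \ell\,\llk\,p\cdot(\log N_E)^{\alpha}\cdot(\log\log N_E)^{\beta},\]
and the second step is to produce, effectively under GRH, a \emph{small} unexceptional prime $p$. Combining the two gives the bound on the largest exceptional prime, and a quantitative refinement, treating reducible and irreducible cases separately, gives the bound on the product.

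For the first step, I fix an exceptional $\ell$ and an unexceptional $p$. Surjectivity of $\rho_{E,p}$ lets me invoke the effective Chebotarev theorem of Lagarias--Odlyzko (under GRH) to hit every conjugacy class of $\gl_2(\ff_p)$ by a Frobenius $\pi_v$ of norm polynomially bounded in $p$ and $\log N_E$. Non-surjectivity of $\rho_{E,\ell}$, combined with the classification of maximal subgroups of $\gl_2(\ff_\ell)$, forces the trace $a_v:=\mathrm{tr}\,\rho_{E,\ell}(\pi_v)$ reduced mod $\ell$ to lie in a proper subset $S_\ell\subset\ff_\ell$. Enough such constraints, fed into the result of our earlier paper \cite{ourfirstpaper}, force $\ell$ to be polynomially controlled by $p$ and $\log N_E$; the exponents that eventually appear will come from combining the $(\log|\Delta_F|)^2$ term in Lagarias--Odlyzko with the discriminant bound for the field $F$ cut out by $\rho_{E,p}\oplus\rho_{E,\ell}$, which is polynomial in $N_E$, $\ell$, $p$, and $n_K$.

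The main obstacle is the second step, namely the effective bound on the smallest unexceptional prime $p$. I plan to adapt Serre's argument from \cite{quelques}. The image of $\rho_{E,p}$ is non-surjective only if it is contained in a subgroup of one of a short list of types: Borel, normalizer of a split or non-split Cartan, or the exceptional $A_4$/$S_4$/$A_5$ type. For each type I can read off a congruence or factorization condition on Frobenius traces that, by effective Chebotarev under GRH, must be violated at some prime $v$ of norm $\llk(\log\log N_E)^3$ lying outside the primes of bad reduction. Quantifying each case effectively --- in particular bounding the discriminants of the fixed fields attached to each subgroup type in terms of $N_E$ and $p$ --- yields an unexceptional $p$ with $p\llk(\log\log N_E)^3$, the analogue for arbitrary $K$ of Serre's $\qq$-bound. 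Combining with the first step then produces $\ell\llk\log N_E\cdot(\log\log N_E)^3$, confirming Serre's conjecture.

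For the product bound, I would separate exceptional primes into reducible and irreducible kinds. The reducible kind is handled by Lemma~\ref{lm:redone}, which already yields a bound polynomial in $\log N_E$, and this accounts for the $(\log N_E)^{14}$ factor together with the $4^{a_E}$ factor coming from the local ambiguity at primes of additive reduction. For the irreducible kind, I refine the first-step argument by iterating over several independent Frobenius choices to produce, for each exceptional $\ell$, a local obstruction of size polynomial in $\log N_E$ and $p$; multiplying these obstructions together across all irreducible exceptional $\ell$, and using Chebotarev to control how many conjugacy classes in $\gl_2(\ff_\ell)$ are involved, produces the additional factor $(a_E+\log\log N_E)^6(\log\log N_E)^{36}$. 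The crude simplification $(a_E+\log\log N_E)\llk\log N_E$ (valid because $a_E\llk\log N_E$) then gives the stated $4^{a_E}\cdot(\log N_E)^{21}$ bound; once the second-step bound on $p$ is in hand, this part is mostly careful bookkeeping.
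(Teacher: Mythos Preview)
Your step~2 is where the argument breaks. You claim that for each subgroup type you can ``read off a congruence\ldots\ that, by effective Chebotarev under GRH, must be violated at some prime $v$ of norm $\llk(\log\log N_E)^3$,'' and that this yields an unexceptional $p\llk(\log\log N_E)^3$. But effective Chebotarev applied to (subrepresentations of) $\rho_{E,p}$ gives a Frobenius $v$ of norm polynomial in $p$ and $\log N_E$, not in $\log\log N_E$; the discriminant of the field cut out by $\rho_{E,p}$ already involves $N_E$ and $p$. So the argument as you describe it is either circular (the bound on $p$ depends on $p$) or only delivers $p\llk(\log N_E)^c$, which is far too weak. The paper does \emph{not} bound $p$ by a direct Chebotarev argument. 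Instead it uses a bootstrap (Lemma~\ref{boot}): the step-1 bound on the \emph{product} of exceptional primes of a given type has the form $\prod\ell\llk A\cdot p^b$ with $A$ polynomial in $\log N_E$; since by the prime number theorem the product of all primes below $p$ is $\asymp e^p$, this forces $p\llk\log A\llk\log\log N_E$. That single line is the whole content of the ``effective'' section, and it is missing from your plan.

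A related point: for the bootstrap to close, step~1 must work for a $p$ that is merely \emph{not of the same type} as the exceptional primes being bounded (not dividing $R_E$ in the reducible case, not $V$-exceptional in the normalizer case), rather than for an unexceptional $p$ as you assume. This is why the paper proves Lemma~\ref{gth} in the extra cases where $p$ is itself exceptional but $p\nmid R_E$. If you insist on $p$ unexceptional, the bootstrap does not directly apply. Finally, two bookkeeping slips: the $4^{a_E}$ factor comes from the \emph{normalizer} case (via $|V|\le 2^{a_E+O_K(1)}$ in Lemma~\ref{lmv} and the $2^d$ in Lemma~\ref{vexc}), not from Lemma~\ref{lm:redone}; and with $p\llk(\log\log N_E)^3$ your exponents would give $\ell\llk\log N_E\cdot(\log\log N_E)^9$, not the stated bound --- the cube in $(\log\log N_E)^3$ actually comes from the $p^3$ in Lemma~\ref{lm:redone} together with $p\llk\log\log N_E$.
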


\subsection*{Acknowledgements}
We would like to thank David Zureick-Brown,
Bryden Cais and Ken Ono for suggesting
us the problem that led to this paper and answering questions.
Thanks also to Barry Mazur for valuable comments and discussions.
We would also like to express our admiration for
Serre's papers
\cite{serre} and \cite{quelques},
whose techniques provide the mathematical inspiration and
departure-point of our work.

\section{\boldmath Possible Images of the Representation $\rho_{E, \ell}$ \label{sec:images}}

In this section, we analyze the possible images
of $\rho_{E, \ell}$.
The proofs of all of the results of this section are in
the papers \cite{serre} and \cite{quelques} by Serre. 
We begin by singling out some subgroups of $\gl_2(\ff_\ell).$

\begin{defi}\label{def:cartan}
A \emph{Cartan} subgroup is a subgroup of
$\gl_2(\ff_\ell)\subset \gl_2(\bar{\ff}_\ell)$ 
which fixes two one-dimensional subspaces
of $\bar{\ff}_\ell^2$, i.e.\ which
in some basis of $\bar{\ff}_\ell^2$ looks like
\[\left(\begin{array}{cc}
* & 0 \\
0 & *
\end{array}\right).\]
\end{defi}

A Cartan subgroup is index two in its normalizer.
The normalizer consists of matrices of the form
\[\left(\begin{array}{cc}
* & 0 \\
0 & *
\end{array}\right)
\quad \text{or} \quad
\left(\begin{array}{cc}
0 & * \\
* & 0
\end{array}\right)\]
(i.e. matrices which either fix or permute the two
subspaces fixed by the Cartan subgroup).

\begin{lm} \label{subgroups}
Let $G$ be any subgroup of $\gl_2(\ff_\ell)$.
Then, one of the following holds:
\begin{enumerate}
\item \label{sg:bor}
(Reducible Case)
$G$ acts reducibly on $\bar{\ff}_\ell^2$.
\item \label{sg:nc}
(Normalizer Case)
$G$ is contained in the normalizer of
a Cartan subgroup,
but not in the Cartan subgroup itself.
\item \label{sg:sl}
(Special Linear Case)
$G$ contains $\slg_2(\ff_\ell)$.
\item \label{sg:exc}
(Irregular Case)
The image of $G$ under
the projection $\gl_2(\ff_\ell) \to \pgl_2(\ff_\ell)$,
is contained in a subgroup which
is isomorphic to $A_4$, $S_4$, or $A_5$.
\end{enumerate}
\begin{rem}
We use the term ``irregular'' subgroup to avoid a clash
of notation; usually they are called ``exceptional''
subgroups.
\end{rem}

\end{lm}
\begin{proof} See Section~2 of \cite{serre}.
\end{proof}

\begin{defi} Having fixed the field $K$, 
we call a prime number $p$ \emph{acceptable}
if $p$ is unramified in $K/\qq$
and $p\ge 53$. (So almost all primes are acceptable.)
For the remainder of the paper, we will
only consider acceptable primes.
\end{defi}

\begin{lm} \label{bigproj} If $p$ is acceptable, then
$\prho_{E, p}$ contains an element of order
at least $13$.
\end{lm}
\begin{proof}
This follows from Lemma~18$'$ of \cite{quelques}
(which is stated for $K = \qq$, but the same proof works
as long as $p$ is unramified in $K$).
\end{proof}

\begin{lm} \label{rednorm}
Let $\ell$ be an acceptable exceptional prime.
Then the image of $\rho_{E, \ell}$ falls into either the
reducible case or the normalizer case of Lemma~\ref{subgroups}.
\end{lm}

\begin{proof}
Since $\ell \nmid \Delta_K$, it follows that
$\det \rho_{E, \ell}$ is surjective, so
the image of $\rho_{E, \ell}$ cannot fall into case~\ref{sg:sl}
because $\ell$ is exceptional.
By Lemma~\ref{bigproj},
the image of $\rho_{E, \ell}$ cannot fall into case~\ref{sg:exc}.
\end{proof}

The two remaining cases will require separate analysis, and throughout
the paper we will separate them as the ``reducible'' case and the
``normalizer'' case. 

\section{The Effective Chebotarev Theorem \label{sec:cheb}}

We have the following effective version
of the Chebotarev Density Theorem, due to
Lagarias and Odlyzko.

\begin{thm}[Effective Chebotarev]\label{thm:chebotarev}
Let $L/K$ be a Galois extension of number fields
with $L \neq \qq$.
Then every conjugacy class of $\gal(L/K)$ is represented
by the Frobenius element
of a prime ideal $v$ such that
\[\nm^K_\qq(v) \ll (\log \Delta_E)^2.\]
\end{thm}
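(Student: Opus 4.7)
The plan is to follow the classical Lagarias--Odlyzko approach via Artin $L$-functions and the explicit formula under GRH. Let $G = \gal(L/K)$. For each irreducible character $\chi$ of $G$, let $L(s,\chi,L/K)$ be the associated Artin $L$-function, and let $N_\chi$ be the absolute norm of its Artin conductor; the conductor--discriminant formula reads $\prod_\chi N_\chi^{\chi(1)} = |\Delta_L|$. I would introduce the weighted counting function
\[\psi_C(x) \;=\; \sum_{\substack{v \text{ unramified},\ m \geq 1 \\ \nm v^m \leq x,\ \pi_v^m \in C}} \log \nm v,\]
and use the orthogonality of characters to rewrite it as
\[\psi_C(x) \;=\; \frac{|C|}{|G|}\sum_\chi \overline{\chi(g_C)}\,\psi_\chi(x),\]
where $\psi_\chi$ is the Chebyshev-type sum attached to $L(s,\chi)$ and $g_C$ is any element of the conjugacy class $C$. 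It is then enough to exhibit some $x \ll (\log|\Delta_L|)^2$ at which $\psi_C(x) > 0$.

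The next step is to apply the explicit formula to each $\psi_\chi$. Under GRH, all non-trivial zeros of $L(s,\chi)$ lie on $\mathrm{Re}(s) = 1/2$, yielding the standard bound
\[\bigl|\psi_\chi(x) - \delta_\chi \cdot x\bigr| \;\ll\; \sqrt{x}\,(\log x)\,\log\!\bigl(N_\chi\, x^{n_K \chi(1)}\bigr),\]
where $\delta_\chi = 1$ if $\chi$ is the trivial character and $0$ otherwise. Substituting into the expression for $\psi_C(x)$ and collapsing the sum via the conductor--discriminant identity, one obtains
\[\Bigl|\psi_C(x) - \tfrac{|C|}{|G|}\,x\Bigr| \;\ll\; \sqrt{x}\,(\log x)\bigl(\log|\Delta_L| + n_L\log x\bigr).\]
Minkowski's discriminant bound forces $n_L \ll \log|\Delta_L|$ once $L \neq \qq$, so taking $x$ a sufficiently large absolute multiple of $(\log|\Delta_L|)^2$ makes the main term $\tfrac{|C|}{|G|}x$ dominate the error and forces $\psi_C(x) > 0$. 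The contribution of prime powers with $m \geq 2$ to $\psi_C(x)$ is only $O(\sqrt{x}\log x)$ and is absorbed into the error, so we may extract an actual prime $v$ (with $m=1$) whose Frobenius lies in $C$ and whose norm is at most $x$.

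The main obstacle is that the Artin $L$-functions $L(s,\chi)$ are not a priori known to be holomorphic, so GRH cannot be invoked directly for them. This is circumvented by Brauer's induction theorem: each $\chi$ is a $\zz$-linear combination of characters induced from one-dimensional characters of subgroups of $G$, so that $L(s,\chi)$ factors as a ratio of Hecke $L$-functions, for which the analytic continuation, functional equation, and GRH-conditional zero estimates are classical. The remaining delicate point is bookkeeping of constants: to land on an \emph{absolute} implied constant in $(\log|\Delta_L|)^2$, every appearance of $n_L$, $|G|$, and $\log N_\chi$ must be folded back into $\log|\Delta_L|$, which is again supplied by Minkowski's bound.
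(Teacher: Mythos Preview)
The paper does not actually prove this theorem; its entire proof is a citation to Lagarias--Odlyzko (with the remark that the improvement to their Corollary~1.2 gives the clean $(\log \Delta_L)^2$ bound). Your sketch is precisely the Lagarias--Odlyzko argument, so in spirit you are doing exactly what the paper invokes.

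One technical point is worth flagging. With the sharp cutoff $\psi_C(x)$ and the explicit-formula error you wrote down, the main term $\tfrac{|C|}{|G|}x$ does \emph{not} quite beat the error at $x$ equal to an absolute constant times $(\log|\Delta_L|)^2$: the stray $\log x$ factors force $x \gg (\log|\Delta_L|)^2(\log\log|\Delta_L|)^2$ instead. (Also, after summing over characters the error should carry a factor $\tfrac{|C|}{|G|}$, obtained via $|\chi(g_C)|\le \chi(1)$ and conductor--discriminant; without it the bound is even weaker.) Getting down to the exact $(\log|\Delta_L|)^2$ requires replacing the sharp cutoff by a smooth kernel in the explicit formula so that the sum over zeros converges absolutely and the $\log x$ factors disappear---this is precisely the ``improvement to Corollary~1.2'' that the paper's citation points to, worked out by Lagarias--Odlyzko in their closing remarks and by Oesterl\'e. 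Your outline is otherwise correct, including the use of Brauer induction to sidestep the Artin conjecture and of Minkowski to absorb $n_L$ into $\log|\Delta_L|$.
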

\begin{proof}
See \cite{lo}, remark at end of paper
regarding the improvement to Corollary~1.2.
\end{proof}

\begin{cor}[Effective Chebotarev with avoidance]\label{cor:ceb}
Let $L/K$ be a Galois extension of number fields
with $L \neq \qq$ and $\Sigma\subset \Sigma_K$ a finite set of primes
which includes the
primes at which $L/K$ is ramified.
Let $N$ be the norm of the product of the primes of $\Sigma$,
and write $d = [L:K]$.
Then every conjugacy class of $\gal(L/K)$ is represented
by the Frobenius element
of a prime ideal $v \in \Sigma_K \smallsetminus \Sigma$ such that
\[\nm^K_\qq(v) \ll d^2 \cdot \big(\log N + \log \Delta_K + n_K \log d \big)^2 \llk d^2 \cdot \big(\log N + \log d\big)^2. \]
\end{cor}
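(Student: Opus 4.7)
The plan is to bound $\log|\Delta_L|$ in terms of the given data, and then to apply a quantitative refinement of Theorem~\ref{thm:chebotarev} to produce more than $|\Sigma|$ primes with Frobenius in a given conjugacy class $C$, at least one of which must then lie outside $\Sigma$.

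First I would bound $\log|\Delta_L|$. From the tower formula
\[
|\Delta_L| = |\Delta_K|^{[L:K]} \cdot \nm_{K/\qq}(\mathfrak{d}_{L/K})
\]
and the fact that the relative different $\mathfrak{d}_{L/K}$ is supported at primes of $L$ above $\Sigma$, the standard local estimate $v_\mathfrak{P}(\mathfrak{d}_{L/K}) \leq e_\mathfrak{P} - 1 + v_\mathfrak{P}(e_\mathfrak{P})$ separates the contribution into a tame part and a wild part. Summing the tame part $e_\mathfrak{P} - 1$ over primes $\mathfrak{P}$ above a fixed $v \in \Sigma$ and then over $v \in \Sigma$ gives a total of at most $d \log N$. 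For the wild part, the identity $v_\mathfrak{P}(e_\mathfrak{P}) = v_p(e_\mathfrak{P}) \cdot e(\mathfrak{P}/p) \leq v_p(d) \cdot e(\mathfrak{P}/p)$, summed globally over all ramified primes of $L$, yields a total bounded by $n_L \log d = n_K d \log d$. Combining yields
\[
\log|\Delta_L| \leq d\bigl(\log|\Delta_K| + \log N + n_K \log d\bigr).
\]

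Next, I would invoke the quantitative form of the Lagarias--Odlyzko effective Chebotarev theorem under GRH, which refines Theorem~\ref{thm:chebotarev} to a lower bound of the shape
\[
\pi_C(X) \geq \frac{|C|}{|G|}\cdot\frac{X}{\log X} - c\, |C|\sqrt{X}\bigl(\log X + \log|\Delta_L|\bigr),
\]
where $\pi_C(X)$ counts primes $v$ of $K$ with $\nm v \leq X$ and $\pi_v \in C$, and $c$ is an absolute constant. Taking $X$ to be a sufficiently large absolute constant times $d^2(\log N + \log|\Delta_K| + n_K \log d)^2$, the discriminant bound from the previous step combined with the elementary estimate $|\Sigma| \leq \log_2 N$ (each prime of $\Sigma$ has norm at least $2$) forces $\pi_C(X) > |\Sigma|$. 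Some prime $v$ with $\nm v \leq X$ and $\pi_v \in C$ must therefore lie in $\Sigma_K \smallsetminus \Sigma$, establishing the first stated bound. The second $\llk$ then follows by absorbing $\log|\Delta_K|$ and $n_K$ into $K$-dependent implicit constants, as permitted by the definition of $\llk$.

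The main obstacle is keeping the discriminant estimate tight enough to match the stated form. A naïve pointwise treatment of wild ramification at each prime of $\Sigma$ would replace the $n_L \log d$ total wild contribution by something like $|\Sigma| \cdot d \log d$, producing an extra factor of $\log d$ (or worse) in the final bound. The sharp $n_L \log d$ estimate requires summing across all primes of $L$ globally using $\sum_p v_p(d) \log p = \log d$ rather than bounding at each prime of $\Sigma$ separately.
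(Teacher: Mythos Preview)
Your approach is correct in outline but takes a different route from the paper. Both arguments begin with the same conductor--discriminant estimate (your tame/wild decomposition matches the paper's computation), but the avoidance mechanism differs.

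The paper does not invoke the counting form of effective Chebotarev. Instead it constructs an auxiliary extension $L' = L[\sqrt[3]{x}, \omega]$, where $x \in K$ generates a principal ideal supported on $\Sigma$ together with a single extra prime $v_0$ of norm $\ll \Delta_K^4$ (found by applying Theorem~\ref{thm:chebotarev} to the Hilbert class field of $K$, using Lenstra's bound $h_K \leq \Delta_K^{3/2}$). By construction $L'/K$ is ramified at every prime of $\Sigma$, so a \emph{single} application of Theorem~\ref{thm:chebotarev} to $L'$ yields one prime $v$ unramified in $L'$, hence automatically outside $\Sigma$, with Frobenius in the prescribed class of $\gal(L/K)$. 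The discriminant of $L'$ is then bounded exactly as you bound $\Delta_L$, with the degree inflated by a factor of $6$ and harmless extra primes dividing $6v_0$ absorbed into the constant.

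Your pigeonhole argument is more direct and sidesteps the auxiliary-field construction, but it requires the full quantitative error term rather than the least-prime statement of Theorem~\ref{thm:chebotarev} alone. One caution: the Lagarias--Odlyzko error term carries the factor $|C|/|G|$, not $|C|$ as you wrote; with the error term as stated your bound would be off by a factor of $d$, and the threshold $X \asymp d^2(\log N + \log \Delta_K + n_K\log d)^2$ would not suffice. With the correct factor the argument goes through.
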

\begin{proof}
Let $H$ be the Hilbert class field of $K$, of degree $h_K$ over $K$. Then 
$\Delta_H =\Delta_K^{h_K}$, so any element of the class group
is represented by a prime ideal $v\in \Sigma_K$ of
norm $\ll (h_K\log\Delta_K)^2$.
It follows from a result of Lenstra
(Theorem~6.5
in \cite{lenstra}) that $h_K \leq \Delta_K^{3/2}$,
so we can take $\nm(v)\ll \Delta_K^4.$
Now, we let $I = \prod_{v \in \Sigma} v$,
and apply this result to the image in the
class group of the ideal $I^{-1}$.
We get a prime ideal $v_0$ with $\nm(v_0) \ll \Delta_K^4$
such that $v_0I$ is principal, generated by $x\in K$.

Define $L' = L[\sqrt[3]{x}, \omega]$,
for a primitive cube root of unity $\omega$.
The set $\Sigma' \subset \Sigma_K$ of primes ramified
in $L'/K$ consists of
all elements of $\Sigma$, plus some primes
dividing $6v_0$.

Now, we apply effective Chebotarev again, to $\gal(L' / K)$,
to conclude that every conjugacy class of $\gal(L'/K)$
is represented by a Frobenius element of a
prime ideal $v \in \Sigma_K$ which is unramified
in $L'$, and thus not in $\Sigma$, with
\[\nm^K_\qq(v) \ll (\log \Delta_{L'})^2.\]
We now turn to bounding $\log \Delta_{L'}$. For a prime $v$ of $K$,
write $e_v$ and $f_v$ for the ramification and inertial degrees of $v$
respectively. We have
\begin{align*}
\log \Delta_{L'} &= [L':K] \cdot \log \Delta_K + \log \nm^{L'}_K \mathfrak{d}^{L'}_K \\
&\leq 6d \log \Delta_K + \sum_{v \in \Sigma'} \left((6d - 1) f_v \log p_v + 6df_v e_v \text{val}_{p_v}(d) \log p_v \right)\\
&\leq 6d \cdot \left(\log \Delta_K + \sum_{v \in \Sigma'} f_v \log p_v + \sum_{v \in \Sigma'} f_v e_v \text{val}_{p_v}(d) \log p_v \right) \\
&\leq 6d \cdot \left(\log \Delta_K + \log (N \cdot 6 \Delta_K^4) + n_K \log d \right) \\
&\ll d \cdot \left(\log N + \log \Delta_K + n_K \log d \right). && \qedhere
\end{align*}
\end{proof}

Throughout the paper, we will frequently apply the above corollary
to Galois representations built out of the representations
$\rho_{E, \ell}$. For this purpose, recall the well-known
N\'eron-Ogg-Shafarevich criterion:

\begin{thm}[N\'eron-Ogg-Shafarevich]
Let $E$ be an elliptic curve over $K$. Then $\rho_{E, \ell}$ is ramified
only at primes dividing $\ell$ and the conductor of $E$.
\end{thm}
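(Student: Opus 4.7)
The plan is to verify the local assertion at each prime $v$ of $K$ that neither divides $\ell$ nor divides the conductor of $E$: the inertia subgroup $I_v \subset G_K$ acts trivially on $E[\ell]$. The condition that $v$ does not divide the conductor means precisely that $E$ has good reduction at $v$, so the task becomes: given good reduction at $v$ and $\ell$ coprime to the residue characteristic $p_v$, show that $\rho_{E,\ell}$ is unramified at $v$.

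To carry this out I would pass to the completion $K_v$ and consider the N\'eron model $\mathcal{E}$ of $E$ over $\mathcal{O}_{K_v}$. Good reduction means that $\mathcal{E}$ is smooth and proper over $\mathcal{O}_{K_v}$, i.e.\ an elliptic curve. The multiplication-by-$\ell$ isogeny $[\ell]\colon \mathcal{E} \to \mathcal{E}$ is then finite and flat of degree $\ell^2$; since $\ell$ is a unit in the residue field $k_v$, the induced map on tangent spaces of the identity section of the special fibre is an isomorphism, so $[\ell]$ is in fact \'etale. Consequently $\mathcal{E}[\ell] := \ker[\ell]$ is a finite \'etale group scheme over $\mathcal{O}_{K_v}$.

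The conclusion then follows from the standard equivalence between finite \'etale group schemes over $\mathcal{O}_{K_v}$ and finite discrete modules over the unramified Galois group $\gal(k_v^{\mathrm{sep}}/k_v) \cong G_{K_v}/I_v$. Concretely, the $\bar{K}_v$-points of $\mathcal{E}[\ell]$ are canonically identified, via the reduction map, with the $\bar{k}_v$-points of the special fibre $\mathcal{E}_v[\ell]$, so the action of the full decomposition group $G_{K_v}$ on $E[\ell]$ factors through the quotient $G_{K_v}/I_v$. This is precisely the statement that $\rho_{E,\ell}$ is unramified at $v$.

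The only genuine ingredient is the existence and good properties of the N\'eron model at a prime of good reduction; in the elliptic-curve case this reduces to the assertion that a minimal Weierstrass equation with smooth reduction defines a smooth projective group scheme over $\mathcal{O}_{K_v}$, which is classical (for instance, Silverman's \emph{Arithmetic of Elliptic Curves}, Ch.\ VII). Beyond that the argument is formal, and the global-to-local passage is automatic because ramification of $\rho_{E,\ell}$ at $v$ depends only on its restriction to a decomposition group at $v$.
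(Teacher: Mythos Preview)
Your sketch is correct and is essentially the standard argument; the paper itself gives no proof at all, merely citing the result as well known (Proposition~4.1 of \cite{silverman}). So there is nothing to compare: you have supplied the proof that the paper defers to the literature.
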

\begin{proof}
This is well known; see e.g.\ Proposition~4.1 of \cite{silverman}.
\end{proof}

\section{Bounds In Terms of The Smallest Unexceptional Prime \label{sec:ub}}

Recall that we have fixed an elliptic curve $E$ over a number
field $K$, and $\ell$ is an exceptional prime for $(E, K)$.
In this section we give bounds on both the largest
exceptional prime and the product of all exceptional primes,
in terms of the smallest \emph{unexceptional} prime.

\subsection{The Reducible Case}

Suppose that $E[\ell]$ is reducible over $\bar{\ff}_\ell$,
and write
\[\rho_{E, \ell} \otimes_{\ff_\ell} \bar{\ff}_\ell = \left(\begin{array}{cc}
\psi_\ell^{(1)} & - \\
0 & \psi_\ell^{(2)}
\end{array}\right).\]

\begin{thm} \label{paperonethm}
There exists a finite set $S_K$ of
primes numbers depending only on $K$ such that if
$\ell \notin S_K$, then there exists a CM
elliptic curve $E'$, which is defined over $K$
and whose CM-field is contained in $K$, such that
for some character $\epsilon_\ell \colon \gal(\bar{K} / K) \to \mu_{12}$,
\begin{align}\label{paperoneformula}
\begin{cases}
\psi_\ell^{(1)} &= \phi_\ell^{(1)} \otimes \epsilon_\ell \\
\psi_\ell^{(2)} &= \phi_\ell^{(2)} \otimes \epsilon_\ell^{-1}
\end{cases} \quad \text{where} \quad
\rho_{E', \ell} \otimes_{\ff_\ell} \bar{\ff}_\ell = \left(\begin{array}{cc}
\phi_\ell^{(1)} & 0 \\
0 & \phi_\ell^{(2)}
\end{array}\right).
\end{align}
Moreover the elliptic curve $E'$ depends only on $E$
(i.e.\ is independent of $\ell$), and $\epsilon_\ell$
is ramified only at primes dividing the conductor of $E$.
\end{thm}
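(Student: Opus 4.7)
The plan is to combine Serre's type-$A_0$ analysis of the characters $\psi_\ell^{(i)}$ with the Shimura--Taniyama correspondence between algebraic Hecke characters of $K$ and CM elliptic curves over $K$.

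First I would record the basic global constraints. By $\det \rho_{E,\ell} = \chi_\ell$, we have $\psi_\ell^{(1)} \psi_\ell^{(2)} = \chi_\ell$, and by N\'eron--Ogg--Shafarevich both $\psi_\ell^{(i)}$ are unramified outside primes dividing $\ell \cdot N_E$. The essential input is control of the restrictions to inertia at primes $v \mid \ell$: for $\ell$ outside a finite set $S_K$ depending only on $K$, a fine analysis of the finite flat group scheme $E[\ell]$ (carried out in our earlier paper \cite{ourfirstpaper}) forces
\[\psi_\ell^{(i)}\big|_{I_v} = \chi_\ell^{a_{v,i}}\]
with the exponents $a_{v,i}$ controlled modulo $12$; this is the origin of the factor $\mu_{12}$ in the theorem statement, coming from the possible Hodge--Tate weights of an elliptic curve combined with the constraint that the fundamental characters appearing are of level $1$ (since we are in the split reducible case after base change to $\bar{\ff}_\ell$).

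Next I would produce the twisting character. Using the constraint on the $a_{v,i}$, the plan is to choose a character $\epsilon_\ell \colon G_K \to \mu_{12}$, ramified only at primes dividing $N_E$, such that the twisted characters $\phi_\ell^{(1)} := \psi_\ell^{(1)} \otimes \epsilon_\ell^{-1}$ and $\phi_\ell^{(2)} := \psi_\ell^{(2)} \otimes \epsilon_\ell$ have inertia restrictions at every $v \mid \ell$ matching those arising from an elliptic curve, i.e.\ each $\phi_\ell^{(i)}|_{I_v} \in \{1, \chi_\ell\}$, distributed complementarily across $i \in \{1,2\}$. The relation $\phi_\ell^{(1)} \phi_\ell^{(2)} = \chi_\ell$ is preserved, and the twisted pair now has the precise local structure of the characters arising from the $\ell$-torsion of a CM elliptic curve over $K$.

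Finally I would pass to the global CM curve. Because $\{\rho_{E,\ell}\}_\ell$ forms a strictly compatible system (traces of Frobenius at good primes are mod-$\ell$ reductions of a fixed integer sequence $a_v(E)$), so does $\{(\phi_\ell^{(1)}, \phi_\ell^{(2)})\}_\ell$. These characters then arise from an algebraic Hecke character $\phi$ of $K$ of infinity type $(1,0)$. Since $\phi$ is a character of $G_K$ itself, its CM field is forced to be contained in $K$, and by Shimura--Taniyama $\phi$ is the Hecke character attached to a CM elliptic curve $E'$ defined over $K$. The $\phi_\ell^{(i)}$ then coincide with the characters arising from $E'[\ell] \otimes \bar{\ff}_\ell$, and $E'$ is determined by $\phi$ (hence by $E$) independently of $\ell$, giving the ``Moreover'' part. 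The main obstacle is the first step: the fine local analysis at primes above $\ell$, which is the technical heart of \cite{ourfirstpaper} and fails without the exclusion of the set $S_K$---without it, the twisting character $\epsilon_\ell$ would have to take values in a much larger group than $\mu_{12}$.
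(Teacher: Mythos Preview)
The paper does not give an argument here: its entire proof is the citation ``See Theorem~1 of \cite{ourfirstpaper} and Remark~1.1 following the theorem.'' Your proposal is therefore an attempted reconstruction of the content of that reference rather than something to compare against an argument in the present paper. The broad shape---local analysis at primes above $\ell$, a $\mu_{12}$-valued twist, passage to an algebraic Hecke character, and then Shimura--Taniyama---is indeed the right sequence of ideas, and you correctly identify the local step as the technical heart requiring the exclusion of $S_K$.

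There are, however, two genuine gaps. First, the claim that $\{(\phi_\ell^{(1)},\phi_\ell^{(2)})\}_\ell$ is a compatible system does not follow from the compatibility of $\{\rho_{E,\ell}\}_\ell$: you have chosen $\epsilon_\ell$ separately for each $\ell$, with no relation imposed across different $\ell$, so there is no reason the Frobenius values of $\phi_\ell^{(i)} = \psi_\ell^{(i)}\otimes\epsilon_\ell^{\mp1}$ should be reductions of a fixed algebraic number. What \emph{is} compatible is the system of twelfth powers $(\psi_\ell^{(i)})^{12}$, which is insensitive to any $\mu_{12}$-twist; the correct order of operations is to extract a single algebraic Hecke character from that, build $E'$ from it once and for all, and only then define each $\epsilon_\ell$ as the discrepancy $\psi_\ell^{(i)}/\phi_\ell^{(i)}$. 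Second, the phrase ``infinity type $(1,0)$'' is only meaningful when $K$ is imaginary quadratic; for general $K$ the infinity type is a tuple over all archimedean places, and one must argue that it is of the special form (supported on a CM type of an imaginary quadratic subfield of $K$) that yields a CM elliptic curve rather than a higher-dimensional CM abelian variety. In particular, when $K$ contains no imaginary quadratic field the conclusion is that the reducible case simply does not occur for $\ell\notin S_K$, a dichotomy your sketch does not address.
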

\begin{proof} See Theorem~1 of \cite{ourfirstpaper}
and Remark~1.1 following the theorem.
\end{proof}
This lets us relate the Frobenius polynomials
of $E$ and $E'$ at small primes of $K$.
We make the following definitions.
\begin{defi}\label{def:A}
Fix $E$ and $E'$ as above. We define 
$R_E$ to be the product of all reducible primes $\ell$ satisfying 
equation~\eqref{paperoneformula}.
\end{defi}

\noindent
The fact that $E'$ depends only on $E$ (for $\ell\ggk 1$)
implies that 
\[\prod_{\rho_{E,\ell}\text{ reducible }}\ell \llk R_E.\]
(Moreover, this is sharp, as $R_E$ divides the product on the left.)
\begin{defi}
  For a polynomial $P\in \zz[x],$ define its \emph{$12$th Adams operation}
$\Psi^{12}P\in \zz[x]$ to be the polynomial whose roots 
(in $\cc$) are the twelfth
powers of the roots of $P$. 
\end{defi}

\noindent
Using this notation and writing 
\[P_E(v) = x^2 + \tr_E(\pi_v) x + \nm(v)\]
for the Frobenius
polynomial of $\pi_v \in G_K$, we have the following result
(where $E'$ is the CM elliptic curve from above).

\begin{lm}\label{lm:frobenius}
Let $v$ be a prime of $K$ at which $E$ has good reduction.
If $4 (\nm v)^6 < R_E$, then 
\[\Psi^{12}P_E(v) = \Psi^{12}P_{E'}(v);\] 
moreover, if $\ell \mid R_E$ is
such that $4\sqrt{\nm v} < \ell$
and $\epsilon_\ell(\pi_v) = 1$ (where
$\epsilon_\ell \colon G_K \to \mu_{12}$ is as in Theorem~\ref{paperonethm}),
then \[P_E(v)=P_{E'}(v).\]
\end{lm}
\begin{proof}
Suppose $\ell \mid R_E$, i.e.\ $\ell$ satisfies equation~\eqref{paperoneformula}.
In particular, $(\psi_\ell^{(1)})^{12} = (\phi_\ell^{(1)})^{12}$ and $(\psi_\ell^{(2)})^{12} = (\phi_\ell^{(2)})^{12}$,
i.e.\ $\Psi^{12}P_E\equiv\Psi^{12}P_{E'}$ mod $\ell$.
Since this holds for all $\ell \mid R_E$, by plugging in $v$ we obtain
\[\Psi^{12}P_E(v) \equiv \Psi^{12}P_{E'}(v) \mod R_E.\]
If moreover $\epsilon_\ell(\pi_v) = 1$, then
$\psi_\ell^{(1)}(\pi_v) = \phi_\ell^{(1)}(\pi_v)$
and $\psi_\ell^{(2)}(\pi_v) = \phi_\ell^{(2)}(\pi_v)$.
Equivalently,
\[P_E(v) \equiv P_{E'}(v) \mod \ell.\]

From the Weil bounds,
$P_{E_0}(v)$ has nonpositive discriminant
and constant term $\nm v$ for any elliptic curve
$E_0$ and prime $v$ of good reduction for $E_0$.
In other words,
\[P_{E_0}(v)=x^2 - ax + \nm v \quad \text{and} \quad \Psi^{12} P_{E_0}(v) = x^2 - bx + \nm v^{12},\]
with $|a|\le 2\sqrt{\nm v}$ and $|b|\le 2(\nm v)^6$.
It follows that
$P_E(v)-P_{E'}(v)=Ax$ for some $|A|\le 4\sqrt{\nm v}$
and $\Psi^{12}P_E-\Psi^{12}P_{E'}=Bx$ for some 
$|B|\le 4(\nm v)^6$.
On the other hand, we have seen above that $\ell\mid A$ and $R_E \mid B$.
The lemma follows, using that $|A|<\ell$ and $\ell\mid A$ imply $A=0$
(and similarly for $B$).
\end{proof}

Now we are in a position to bound any prime $\ell$ with
reducible $\rho_{E,\ell}$ (or the product of all such)
in terms of a small unexceptional prime $p$.

\begin{lm} \label{gth}
Suppose that $p$ is an acceptable prime that does not divide $R_E$.
Let $E'$ be as above,
and let $H \subset \gl_2(\ff_p) \times \gl_2(\ff_p)$
be the image of $\rho_{E, p} \times \rho_{E', p}$.
Then there exists a surjection $f \colon H \twoheadrightarrow G$
with $|G| \ll p^3$,
and a $g \in G$ such that for any $(X, Y) \in H$ with $f(X, Y) = g$,
we have $\tr(X^{12}) \neq \tr(Y^{12})$.
\end{lm}
\begin{proof}
First suppose that $p$ is unexceptional.
By the theory of complex multiplication,
the image of
$\rho_{E', p}$ is contained in either a split or a nonsplit
Cartan subgroup.
Hence, the image of the projectivization $\prho_{E', p}$
is contained in a cyclic group of order $p \pm 1$.
Since $p$ is acceptable, $p \pm 1 \nmid 12$.
It follows that there is
an $M \in \pgl_2(\ff_p)$ whose $12$th power is not conjugate
to anything in the image of $\prho_{E', p}$.
Taking $G = \pgl_2(\ff_p)$ and $f$ to be projection
onto the first factor followed by the projection
$\gl_2(\ff_p) \to \pgl_2(\ff_p)$ completes the proof
in this case.

Hence we can assume that $p$ is of normalizer type, or of
reducible type but not satisfying the condition (\ref{paperoneformula})
with respect to the CM curve $E'$.
Write $\tilde{\rho}_{E, p}$
for the semisimplification (i.e.\ direct sum of the
Jordan-Holder quotients) of $\rho_{E, p}$. 
To bound the product of all exceptional primes
of $E$, we consider the Galois representation
\[\Pi = \tilde{\rho}_{E, p} \times \rho_{E', p} \colon G_K \to \gl_2(\ff_p) \times \gl_2(\ff_p).\]
Since $\det \rho_{E, p} = \det \rho_{E', p}$ is surjective onto $\ff_p^\times$
and either Cartan or Normalizer
subgroups of $\gl_2(\ff_p)$ have $\ll p^2$ elements,
the image has order $\ll p^3$.

Now we claim that the image of $\Pi$ contains
something of the form $(X, Y)$ for which $\tr X^{12} \neq \tr Y^{12}$.
If $p$ is of reducible type, then this is clear
by the assumption that $p \nmid R_E$.
If $p$ is of normalizer type, then
by Lemma~\ref{bigproj},
the projective
image of $\prho_{E, p}$ contains an element of order
at least $13$.
In particular, it must contain something of the form
\[A = \left(\begin{array}{cc}
a & 0 \\
0 & b
\end{array}\right),\]
with $a^{12} \neq b^{12}$. Let $B$ be an element of 
$\gl_2(\ff_p)$ in the image of $\rho_{E, p}$ that
is not in the Cartan group.
Since the image of $\rho_{E', p}$ is abelian,
it follows that the image of $\Pi$ contains
$(1, M)$, where $M = ABA^{-1}B^{-1}$. By explicit computation,
\[M = \left(\begin{array}{cc}
a^{-1}b & 0 \\
0 & b^{-1}a
\end{array}\right).\]
Taking $X = 1$ and $Y = M$ thus completes the proof.
\end{proof}

\begin{lm} \label{lm:redone}
Let $p$ be the smallest acceptable prime that does not divide $R_E$.
\[R_E \llk p^{36} \cdot (\log N_E + \log p)^{12}.\]
Moreover, any prime $\ell \mid R_E$ satisfies
\[\ell \llk p^3 \cdot (\log N_E + \log p).\]
\end{lm}

\begin{proof}
Let $f\colon H \twoheadrightarrow G$ and $g\in G$ be as in
Lemma~\ref{gth}.

First, we bound $R_E$.
By Corollary~\ref{cor:ceb} applied to $g \in G$, N\'eron-Ogg-Shafarevich, and Lemma~\ref{gth}, there is
a prime $v$ of good reduction for $E$
such that $\tr \rho_{E, p}(\pi_v^{12}) \neq \tr \rho_{E', p}(\pi_v^{12})$,
which moreover satisfies
\begin{equation}\label{boundonpi}
\nm v \llk p^6 \cdot (\log N_E + \log p)^2.  
\end{equation}
In particular,
$\Psi^{12}P_{E}(v)\neq \Psi^{12}P_{E'}(v)$,
so by Lemma~\ref{lm:frobenius} we have
\[R_E \leq 4(\nm v)^6 \llk p^{36} \cdot (\log N_E + \log p)^{12}.\]


To bound the largest exceptional prime, we consider
the direct sum of $\epsilon_\ell$ and $G$.
Since $\epsilon_\ell$
has order~$12$, the image of this Galois representation
contains $(1, g^{12})$.
Applying Corollary~\ref{cor:ceb} to $g^{12} \in G$, N\'eron-Ogg-Shafarevich,
and Lemma~\ref{gth}, we can find a prime
$v$ of good reduction for $E$
such that $\tr \rho_{E, p}(\pi_v) \neq \tr \rho_{E', p}(\pi_v)$
and $\epsilon_\ell(\pi_v) = 1$, which
satisfies the bound \eqref{boundonpi}.
In particular, $P_E(v) \neq P_{E'}(v)$,
so Lemma \ref{lm:frobenius}
gives $\ell\leq 4\sqrt{\nm v}\llk p^6\cdot (\log N_E + \log p)$,
as desired.
%
\end{proof}

\subsection{The Normalizer Case}

Let $\ell$ be a prime such that
the image of $\rho_{E, \ell}$ falls into the
normalizer case of Lemma~\ref{subgroups}.
Write $C$ for our Cartan subgroup and $N$ for its normalizer.
Then we have a quadratic character $\chi$
on $\gal(\bar{\qq}/K)$ given by

\[\chi \colon \gal(\bar{\qq}/K) \to N \to N/C \simeq \{\pm 1\}.\]

\begin{lm}
The character $\chi$ is ramified only at places of bad additive reduction.
\end{lm}
\begin{proof}
See Lemma~2
in Section~4.2 of \cite{serre}.
\end{proof}

In this case, we say that $\ell$ is
\emph{$\chi$-exceptional (of normalizer type)}.
More generally, 
if $V\subset \hom(G_K,\zz/2)$ is an $\ff_2$-vector space 
of Galois characters,
we say that $\ell$ is $V$-exceptional if $\ell$ is
$\chi$-exceptional for some $\chi \in V$. Note that
the space $V$ of characters induces a Galois extension of
$K$ with Galois group the dual $\ff_2$-vector space $V^*$,
via the following construction.
\begin{defi} For $V\subset \hom(G_K,\zz/2\zz)$,
we write $\rho_V \colon G_K \to V^*$ for the 
map induced by the pairing $V\times G_K^{\text{ab}}\to \ff_2$.
\end{defi}
This gives (functorially) a one-to-one correspondence
between finite $\ff_2$-vector spaces of Galois characters and
finite abelian field extensions with Galois group annihilated
by $2$. 

\begin{lm} \label{lmv}
The vector space $V$ of all quadratic Galois characters
ramified only at places of bad additive reduction satisfies
\[|V| \leq 2^{a_E + 2n_K} \cdot h_K.\]
(In fact, the argument below shows
$|V| \leq 2^{a_E + 2n_K} \cdot 2^{r_2(\cla(K))}$,
where $r_2(\cla(K))$ is the $2$-rank of the class group.)
\end{lm}
\begin{proof}
Note that $|V|=|V^*|.$ 
Write $U_K$ for the subgroup of principal id\`eles in the group
$\mathbb{I}_K$ of id\`eles.
By class field theory, $\rho_V$ induces a surjection
$\mathbb{I}_K \to V^*$. Since $[\mathbb{I}_K : U_K] = h_K$,
it suffices to show that
$\rho_V(U_K)\subset V$ has order at most $2^{a_E + n_K}$.
However, the restriction
$\rho_V|_{U_K}$ factors through the projection
\[U_K\to \prod_{\text{$v$ of additive reduction}} \oo_v^*/(\oo_v^*)^2.\]
Now by a standard application of Hensel's lemma, 
if $p_v\ne 2$ then $\oo_v^* / (\oo_v^*)^2=\ff_2,$ and if $p_v=2$ then
$\oo_v^* / (\oo_v^*)^2$ is a vector space over $\ff_2$
of dimension at most $2e_v f_v$.
Since $\sum_{v \mid 2} 2e_v f_v = 2n_K$,
this gives the desired bound.
\end{proof}

\begin{lm} \label{vexc}
Let $V$ be a $d$-dimensional vector space of quadratic Galois characters
ramified only at places of bad additive reduction,
and let $p$ be the smallest acceptable prime that is not
$V$-exceptional.
Then the product of all $V$-exceptional primes $\ell$
satisfies
\[\prod \ell \llk \left(2^d \cdot p^3 \cdot (\log N_E + \log p)\right)^{2 - 2^{1 - d}} .\]
\end{lm}
\begin{proof}
We start by showing that for any $\alpha\in V^*$,
there is some $X_\alpha\in \pgl_2(\ff_p)$
of nonzero trace such that $(\alpha, X_\alpha)$ is contained in
the image of $\rho_V \times \prho_{E, p}$. 

If $p$ is unexceptional,
then $\prho_{E, p}$ surjects onto $\pgl_2(\ff_p)$.
Hence, the abelianization of $\prho_{E, p}$
is the quadratic character defined by $\pgl_2(\ff_p) / \psl_2(\ff_p)$.
Since $V^*$ is an abelian group, the image of $\rho_V \times \prho_{E, p}$
contains everything of the form $(\alpha, X)$ either for
every $X \in \psl_2(\ff_p)$, or for every $X \notin \psl_2(\ff_p)$.
Either way, the image contains something of the
form $(\alpha, X_\alpha)$ where $X_\alpha$ has nonzero trace.

If $p$ is exceptional, then since $p$ is acceptable,
$p$ is either of normalizer or of reducible type.
Pick some $Y_\alpha$ so that $(\alpha, Y_\alpha)= (\rho_V \times \rho_{E, \ell})(g_\alpha)$
is in the image of $\rho_V \times \prho_{E, p}$.
If $p$ is exceptional of normalizer type, then since
$p$ is not $V$-exceptional, we can choose $Y_\alpha$
so that $Y_\alpha$ lies in the Cartan subgroup. If $\tr(Y_\alpha)\neq 0$,
we are done, so suppose $\tr(Y_\alpha)=0$. 
From Lemma~\ref{bigproj}, there is an element
$Z_\alpha=\prho_{E, p}(h_\alpha)$ of order greater than four
in the image of $\rho_{E,p}$ (which must lie in the Cartan subgroup).
Now we can take $X_\alpha=Y_\alpha Z_\alpha^2$ which has nonzero trace
and satisfies
$(\rho_V \times \rho_{E,p})(g_\alpha h_\alpha^2)=(\alpha,Y_\alpha Z_\alpha^2)$.
as desired.

Now, for each $\alpha \in V^*$, let $X_\alpha$ be the element constructed above.
Applying Corollary~\ref{cor:ceb} and N\'eron-Ogg-Shafarevich,
we can find a prime ideal $v_\alpha$ such that
$(\rho_V \times \epsilon_\ell)(\pi_{v_\alpha}) = (\alpha, X_\alpha)$,
which moreover satisfies
\[\nm v_\alpha \llk 4^d \cdot p^6 \cdot (\log N_E + \log p + d)^2 \llk 4^d \cdot p^6 \cdot (\log N_E + \log p)^2.\]
(The last inequality follows from Lemma~\ref{lmv}, using $a_E \llk \log N_E$.)
This gives by the Weil bound that for any $\alpha\in V^*$ we can
choose $v_\alpha$ so that
\begin{align}\label{eq:normtrace}
0 \neq \tr_E(\pi_{v_\alpha})\llk 2^d \cdot p^3 \cdot (\log N_E+\log p)^2.
\end{align}
Now, $\tr_E(\pi_{v_\alpha})$ must be divisible by all
$V$-exceptional primes $\ell$ whose corresponding
character $\chi_\ell$ satisfies $\chi_\ell(\pi_{v_\alpha}) = -1$.
But for any $\chi_\ell$, half of the $\alpha\in V^*$ satisfy 
$\chi_\ell(\pi_{v_\alpha}) = -1$.
Putting this together,
\[\left. \left(\prod_{\substack{\ell\text{ exceptional}\\
\text{of normalizer type}}} \ell\right)^{2^{d - 1}} \, \right| \, 
\prod_{\alpha \neq 0 \in V^*} \tr_E(\pi_{v_\alpha}) \leq \left(
c_K \cdot 2^d \cdot p^3 
\cdot (\log N_E + \log p + d)\right)^{2^d - 1},\]
where $c_K$ is the 
effective constant implicit in equation~(\ref{eq:normtrace}).
Taking the $2^{d - 1}$st root of both sides yields the desired conclusion.
\end{proof}

\section{The Ineffective Bound}

\begin{lm} \label{inun}
If $p$ is the smallest acceptable unexceptional prime for an elliptic curve
$E$ without CM, then
$p \lllk 1$.
\end{lm}
\begin{proof}
By Serre's Open Image Theorem \cite{serre},
it suffices to verify the statement for all but finitely
many elliptic curves $E$ over $K$.
In order to do this, let $p$ be some acceptable prime.
In particular, $p \geq 23$, so
the genera of the modular curves
$X_0(p)$, $X_{\text{split}}(p)$, and $X_{\text{nonsplit}}(p)$
are all at least $2$.
By Falting's theorem \cite{faltings}, there are finitely
many points on each of these modular curves,
which completes the proof.
\end{proof}

\begin{thm} \label{ineffective}
Let $E$ be an elliptic curve over a number field $K$ without CM.
Then any exceptional prime $\ell$ satisfies
\[\ell \lllk \log N_E.\]
Moreover, the product of all exceptional primes satisfies
\[\prod \ell \lllk 4^{a_E} \cdot (\log N_E)^{14}.\]
\end{thm}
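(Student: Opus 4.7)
The plan is to combine the bounds from Section~\ref{sec:ub} with the ineffective bound on the smallest unexceptional prime from Lemma~\ref{inun}. Fix $p$ to be the smallest acceptable unexceptional prime for $E$; by Lemma~\ref{inun}, $p \lllk 1$. Because $p$ is unexceptional, $p$ does not divide $R_E$ and $p$ is not $V$-exceptional for any $V$, so $p$ satisfies the hypotheses of both Lemma~\ref{lm:redone} and Lemma~\ref{vexc}.

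I would first bound the largest exceptional prime $\ell$. By Lemma~\ref{rednorm}, such an $\ell$ is of reducible or of normalizer type. In the reducible case, either $\ell$ lies in the finite $K$-dependent set $S_K$ of Theorem~\ref{paperonethm} (so $\ell \lllk 1$) or $\ell \mid R_E$, in which case Lemma~\ref{lm:redone} gives $\ell \llk p^3(\log N_E + \log p) \lllk \log N_E$. In the normalizer case, let $\chi_\ell$ be the associated quadratic character and apply Lemma~\ref{vexc} with $V = \ff_2 \cdot \chi_\ell$ (so $d=1$), obtaining $\ell \llk 2 p^3(\log N_E + \log p) \lllk \log N_E$.

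For the product, I bound the reducible and normalizer contributions separately. By the remark following Definition~\ref{def:A} (which absorbs $S_K$ into the $\llk$), the product of reducible exceptional primes is $\llk R_E$, and Lemma~\ref{lm:redone} yields $R_E \llk p^{36}(\log N_E + \log p)^{12} \lllk (\log N_E)^{12}$. For the normalizer contribution, take $V$ to be the full $\ff_2$-vector space of quadratic characters of $G_K$ ramified only at primes of additive reduction of $E$; by Lemma~\ref{lmv}, $d := \dim V$ satisfies $2^d \leq 2^{a_E + 2n_K} h_K \llk 2^{a_E}$, and every normalizer-exceptional $\ell$ is $V$-exceptional. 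Applying Lemma~\ref{vexc} yields
\[\prod_{\ell\text{ of normalizer type}} \ell \llk \bigl(2^d p^3 (\log N_E + \log p)\bigr)^{2 - 2^{1-d}} \leq \bigl(2^d p^3 (\log N_E + \log p)\bigr)^{2} \lllk 4^{a_E}(\log N_E)^2.\]
Multiplying the two contributions gives $\prod \ell \lllk 4^{a_E}(\log N_E)^{14}$.

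All of the real work is already packaged into the lemmas of Section~\ref{sec:ub}, so the remaining argument is essentially bookkeeping: one checks that the bound $2^d \llk 2^{a_E}$ from Lemma~\ref{lmv} converts the Lemma~\ref{vexc} estimate into the target factor $4^{a_E}$, and that the reducible and normalizer log-exponents add up to $12+2=14$. The sole source of ineffectivity is the appeal to Lemma~\ref{inun} (via Faltings' theorem); an effective bound on $p$ would immediately upgrade this proof to the effective Theorem~\ref{effective}.
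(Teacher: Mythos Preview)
Your proof is correct and follows exactly the paper's approach: the paper's own proof is the single sentence that the theorem is an immediate consequence of Lemmas~\ref{lm:redone}, \ref{vexc}, and~\ref{inun}, and you have simply unpacked that deduction. One minor technicality worth noting: Lemmas~\ref{lm:redone} and~\ref{vexc} are phrased for the \emph{smallest} acceptable $p$ with the relevant property, whereas your $p$ (the smallest acceptable unexceptional prime) may be larger; but the proofs of those lemmas make no use of minimality, so their conclusions apply to your $p$ as well.
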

\begin{proof}
This is an immediate consequence of Lemmas
\ref{lm:redone}, \ref{vexc}, and \ref{inun}.
\end{proof}

\section{The Effective Bound}
The bound on the smallest unexceptional prime $p$ in the
previous section relies on Falting's theorem, which at
the moment is ineffective. Here we give an effective bound
on $p$ (which depends on the curve $E$, but quite gently),
using the results of Section \ref{sec:ub}.

\begin{lm} \label{boot}
Let $S$ be a finite set of primes,
$p$ be the smallest acceptable prime number not in $S$,
and $b$ be a constant depending only on $K$. Then for any $A$,
\[\prod_{\ell \in S} \ell \llk A \cdot p^b 
\quad \Rightarrow \quad p \llk \log A.\]
\end{lm}

\begin{proof}
Since the product of all unacceptable primes depends only on $K$,
it suffices to prove this lemma in the case that $S$ contains
all of the unacceptable primes. Using (an effective version of)
the prime number theorem,
\[p \ll \sum_{\ell < p} \log \ell \leq \log 
\left(\prod_{\ell \in S} \ell\right) \llk \log 
\left(A \cdot p^b\right) \llk \log A + \log p.\]
The desired result follows immediately.
\end{proof}

\begin{thm} \label{effective}
Let $E$ be an elliptic curve over a number field $K$ without CM.
Then any exceptional prime $\ell$ satisfies
\[\ell \llk \log N_E \cdot (\log \log N_E)^3.\]
Moreover, the product of all exceptional primes satisfies
\[\prod \ell \llk 4^{a_E} \cdot (\log N_E)^{14} \cdot (a_E + \log \log N_E)^6 \cdot (\log \log N_E)^{36}.\]
\end{thm}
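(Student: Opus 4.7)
The plan is to mirror the ineffective argument of Theorem \ref{ineffective}, but replace the use of Lemma \ref{inun} (Faltings) with Lemma \ref{boot}. The structure already hands us four building blocks: Lemma \ref{lm:redone} (reducible bound), Lemma \ref{vexc} (normalizer bound), Lemma \ref{lmv} (dimension bound $d \llk a_E$), and Lemma \ref{boot} (PNT bootstrap). The whole proof will be a careful bookkeeping combination of these.

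\textbf{Step 1: Bound the smallest acceptable $p_1$ not dividing $R_E$.} Lemma \ref{lm:redone} gives $R_E \llk p_1^{36} \cdot (\log N_E + \log p_1)^{12}$. Apply Lemma \ref{boot} with $S$ the set of primes dividing $R_E$, $A = (\log N_E + \log p_1)^{12}$, and $b = 36$; the conclusion is $p_1 \llk \log A \ll \log\log N_E + \log\log p_1$, which after absorbing $\log\log p_1$ into $p_1$ gives $p_1 \llk \log\log N_E$. Plugging back into Lemma \ref{lm:redone} yields any reducible exceptional $\ell \llk (\log\log N_E)^3 \cdot \log N_E$ and $R_E \llk (\log\log N_E)^{36} \cdot (\log N_E)^{12}$.

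\textbf{Step 2: Bound an individual normalizer exceptional prime $\ell$.} Here the subtlety is that the full vector space $V$ has dimension $\llk a_E$, which is too large to give $\log\log N_E$ style bounds. Instead, for each normalizer-exceptional $\ell$ apply Lemma \ref{vexc} with $V = \{0, \chi_\ell\}$, so $d = 1$ and the exponent $2 - 2^{1-d} = 1$. The bound becomes $\ell \llk p_\ell^3 \cdot (\log N_E + \log p_\ell)$ for $p_\ell$ the smallest acceptable prime not $\chi_\ell$-exceptional. Bootstrapping via Lemma \ref{boot} with $A = \log N_E + \log p_\ell$ and $b = 4$ gives $p_\ell \llk \log\log N_E$, so $\ell \llk (\log\log N_E)^3 \cdot \log N_E$. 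Combined with Step 1 this establishes the individual prime bound in the theorem.

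\textbf{Step 3: Bound the product of normalizer exceptional primes.} Now take $V$ to be the full space of quadratic characters ramified only at additive primes, so by Lemma \ref{lmv}, $d \llk a_E$. Apply Lemma \ref{vexc}: the product is $\llk (2^d \cdot p_2^3 \cdot (\log N_E + \log p_2))^2$ where $p_2$ is the smallest acceptable prime not $V$-exceptional. Apply Lemma \ref{boot} with $A = 4^d \cdot (\log N_E + \log p_2)^2$ and $b = 6$; then $p_2 \llk d + \log\log N_E \llk a_E + \log\log N_E$. Substituting back, the product of all normalizer exceptional primes is $\llk 4^{a_E} \cdot (a_E + \log\log N_E)^6 \cdot (\log N_E)^2$.

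\textbf{Step 4: Combine.} Multiply the reducible bound $(\log\log N_E)^{36} \cdot (\log N_E)^{12}$ from Step 1 with the normalizer bound from Step 3, giving $\prod \ell \llk 4^{a_E} \cdot (\log N_E)^{14} \cdot (a_E + \log\log N_E)^6 \cdot (\log\log N_E)^{36}$, precisely the theorem's bound. The main obstacle is the bootstrap in Lemma \ref{boot} when $A$ itself depends on $p$; one has to verify that the resulting inequality $p \llk \log\log N_E + \log\log p$ really does close up on an absolute bound, and that the dimension $d$ can be treated as $\llk a_E$ uniformly (this is where Lemma \ref{lmv} is essential, along with $r_K, h_K, n_K$ being constants of $K$). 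Past those points the argument is a direct chain of substitutions.
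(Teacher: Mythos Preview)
Your proposal is correct and follows essentially the same approach as the paper's own proof: bootstrap via Lemma~\ref{boot} to bound the smallest acceptable prime outside the relevant set, then substitute back into Lemmas~\ref{lm:redone} and~\ref{vexc}. The one point you spell out that the paper leaves implicit is the use of $d=1$ (i.e.\ $V=\{0,\chi_\ell\}$) in Lemma~\ref{vexc} to obtain the individual bound $\ell\llk \log N_E\cdot(\log\log N_E)^3$ in the normalizer case; the paper's terse ``Lemmas~\ref{lm:redone} and~\ref{vexc} imply the desired result'' presumes the reader will make exactly this choice, since applying Lemma~\ref{vexc} only with the full $V$ would introduce an unwanted factor of $2^{a_E}$ into the single-prime bound.
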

\begin{proof}
From the bound on the product in
Lemma~\ref{lm:redone}, together with Lemma~\ref{boot},
we conclude that the smallest prime $p$ not dividing $R_E$
satisfies $p \llk \log \log N_E$.
Similarly, from the bound on the product
in Lemma~\ref{vexc}, together with Lemma~\ref{boot},
we conclude that the smallest prime $p$
that is not $V$-exceptional satisfies
$p \llk d + \log \log N_E$ (where $d = \dim V$).

Thus, Lemmas \ref{lm:redone} and \ref{vexc} imply the desired result.
\end{proof}

\section{Explicit Constants}
In this section, we estimate the dependence
on $K$ in Theorem~\ref{ithm:effective}. 
Everything used to prove Theorem~\ref{ithm:effective}
boils down to the effective
Chebotarev theorem (for which the $K$-dependence is
explicit), and Theorem~\ref{paperonethm}. To make Theorem~\ref{paperonethm}
effective, we can use the following result:

\begin{thm}
In Theorem~\ref{paperonethm}, every $\ell \in S_K$ satisfies:
\begin{align*}
\ell &\ll \exp\left(c^{n_K} \cdot (R_K \cdot n_K^{r_K} + h_K \cdot \log \Delta_K)\right); \\
\intertext{moreover, the product of all $\ell \in S_K$ is bounded by:}
\prod \ell &\ll \exp\left(c^{n_K} \cdot (R_K \cdot n_K^{r_K} + h_K^2 \cdot (\log \Delta_K)^2)\right).
\end{align*}
Here, $c$ is an effectively computable absolute constant.
\end{thm}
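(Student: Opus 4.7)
The plan is to make Theorem~\ref{paperonethm} effective by tracing the construction of $E'$ in \cite{ourfirstpaper} and bounding explicitly the finite set $S_K$ of primes at which the twisting identification \eqref{paperoneformula} can fail. That construction produces $E'$ from a Hecke-character description of the Jordan--H\"older constituents $\psi_\ell^{(1)}, \psi_\ell^{(2)}$; the primes in $S_K$ are precisely those for which descent of these characters (up to a $\mu_{12}$-twist) to an actual CM elliptic curve over $K$ is obstructed. I would decompose $S_K = S_K^{\mathrm{unit}} \cup S_K^{\mathrm{cl}}$ according to whether the obstruction comes from units of $\oo_K$ or from ray class characters, and estimate each piece separately.

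For $S_K^{\mathrm{unit}}$, I would fix a fundamental system of units of $\oo_K^\times$. By standard geometry-of-numbers arguments, these units have heights controlled by $R_K$, and therefore any element of a fixed finite quotient of $\oo_K^\times$ which lifts to $\oo_K$ has norm at most $\exp(c^{n_K} \cdot R_K \cdot n_K^{r_K})$. A prime $\ell \in S_K^{\mathrm{unit}}$ must divide the order of this finite quotient, so \emph{both} the individual and product bounds for this piece are $\exp(c^{n_K} \cdot R_K \cdot n_K^{r_K})$, since the primes involved all divide a common explicit integer.

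For $S_K^{\mathrm{cl}}$, the obstruction lives in a group of ray class characters of $K$ of conductor bounded in terms of $K$ alone. Using Lenstra's bound $h_K \leq \Delta_K^{3/2}$ (as already invoked in Corollary~\ref{cor:ceb}) together with the standard estimate that the regulator of the Hilbert class field of $K$ is $\ll h_K \log \Delta_K$, any single such character has order at most $\exp(c^{n_K} \cdot h_K \log \Delta_K)$, yielding the bound on an individual $\ell$. For the product bound, however, one bounds the order of the entire group of such characters rather than a single element; this group is built as an extension involving $h_K$ many independent pieces of ray class data, so its order picks up a second factor of $h_K \log \Delta_K$, producing $\exp(c^{n_K} \cdot h_K^2 \cdot (\log \Delta_K)^2)$.

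The main obstacle is the careful bookkeeping around the $\mu_{12}$-twist $\epsilon_\ell$ together with the verification that the obstruction decomposes cleanly into the unit and class-field pieces, so that the unit piece does \emph{not} acquire a quadratic factor in the product bound. One must also confirm that the additional root-of-unity twist introduces no new bad primes beyond those captured in the two pieces, and that any cross terms are absorbed into the overall factor $\exp(c^{n_K})$. Once this separation is established, the remaining estimates reduce to standard applications of geometry of numbers and explicit class field theory, using the same toolkit already deployed in Section~\ref{sec:cheb}.
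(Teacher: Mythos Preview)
The paper does not actually prove this theorem in the text: its entire proof is a citation to Theorem~7.9 of \cite{ourfirstpaper}, together with the remark that the individual-prime bound follows by ``replacing $B_{\text{poss}}(K,g,V)$ by $1$'' in that argument. So there is no in-paper proof to compare your proposal to; the substantive content lives in the companion paper, and your sketch is really an attempt to guess what that argument looks like.

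At the level of broad strategy your decomposition $S_K = S_K^{\mathrm{unit}} \cup S_K^{\mathrm{cl}}$ is plausible and consistent with the shape of the stated bounds (a regulator term plus a class-number term), but several steps in your outline are too loose to stand as a proof plan. First, units of $\oo_K$ all have norm $\pm 1$, so saying a lift ``has norm at most $\exp(c^{n_K} R_K n_K^{r_K})$'' cannot be what you mean; presumably you intend a height bound on a fundamental system of units, but then you need to explain why a prime $\ell \in S_K^{\mathrm{unit}}$ divides a specific nonzero integer of that size, not merely ``the order of a finite quotient'' which you have not identified. Second, your derivation of the squared factor $h_K^2(\log\Delta_K)^2$ in the product bound (``$h_K$ many independent pieces of ray class data'') does not account for the second $\log\Delta_K$; as written it would only yield $h_K^2\log\Delta_K$. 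The paper's hint --- that the passage from the product bound to the individual bound is exactly ``set $B_{\text{poss}}(K,g,V)=1$'' --- tells you that the discrepancy between the two displayed bounds is controlled by a single explicit quantity in the cited proof, and you would do better to locate and estimate that quantity directly rather than invent a parallel mechanism.

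In short: your outline is in the right spirit but is not yet a proof, and since the paper itself defers entirely to \cite{ourfirstpaper}, the honest comparison is that both you and the paper are pointing at an argument that lives elsewhere.
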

\begin{proof}
See Theorem~7.9 of \cite{ourfirstpaper} for the bound
on the product of all $\ell \in S_K$. The bound on the largest
element of $S_K$ can be proved in a similar way (just replace
$B_{\text{poss}}(K, g, V)$ by $1$ in the proof of Theorem~7.9
in \cite{ourfirstpaper}).
\end{proof}

\begin{thm} \label{explicitthm}
Let $E$ be an elliptic curve over a number field $K$ without CM.
Then any exceptional prime $\ell$ satisfies
\[\ell \ll \log N_E \cdot (\log \log N_E)^3 + \exp\left(c^{n_K} \cdot (R_K \cdot n_K^{r_K} + h_K \cdot \log \Delta_K)\right).\]
Moreover, the product of all exceptional primes satisfies
\begin{align*}
\prod \ell &\ll 4^{a_E} \cdot (\log N_E)^{13} \cdot (a_E + \log \log N_E)^3 \cdot (\log \log N_E)^{36} \\
&\qquad\qquad
\cdot \exp\left(c^{n_K} \cdot (R_K \cdot n_K^{r_K} + h_K^2 \cdot (\log \Delta_K)^2)\right).
\end{align*}
Here, the constant $c$ and the constants implied by the $\ll$ symbol
are all \emph{absolute} and effectively computable.
\end{thm}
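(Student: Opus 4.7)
The plan is to rerun the proof of Theorem~\ref{effective} while making every $K$-dependent constant explicit, and then to split the contributions into two parts: one depending only on $N_E$ (which will yield the first term of each bound) and one depending only on $K$ (which will be absorbed into the exponential term on the right-hand side). The only new input beyond what proved Theorem~\ref{effective} is the explicit bound on $S_K$ stated just above; primes $\ell \in S_K$ contribute to the final bound directly through the exponential term, so it remains to bound $\ell \notin S_K$.

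For $\ell \notin S_K$, Lemma~\ref{rednorm} places $\ell$ in either the reducible or the normalizer case. In the reducible case, I would reopen the proof of Lemma~\ref{lm:redone}, inserting the fully explicit form of Corollary~\ref{cor:ceb} (which supplies $\log N + \log \Delta_K + n_K \log d$ inside the square) and Lenstra's bound $h_K \leq \Delta_K^{3/2}$. This produces, with absolute implied constants,
$$R_E \ll p^{36} \bigl(\log N_E + \log \Delta_K + n_K \log p\bigr)^{12}, \qquad \ell \ll p^3 \bigl(\log N_E + \log \Delta_K + n_K \log p\bigr),$$
where $p$ denotes the smallest acceptable prime that lies outside $S_K$ and does not divide $R_E$. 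In the normalizer case, the analogous reworking of Lemma~\ref{vexc}, combined with Lemma~\ref{lmv} and Lenstra to obtain $d \ll a_E + n_K + \log \Delta_K$, yields an explicit bound on the product of all $V$-exceptional primes of the form $\bigl(2^d p^3 (\log N_E + \log \Delta_K + n_K \log p)\bigr)^{2 - 2^{1-d}}$ with $p$ the smallest acceptable prime outside $S_K$ that is not $V$-exceptional.

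To bound these smallest ``good'' primes $p$, I would apply the explicit form of Lemma~\ref{boot}, noting that the primes to be avoided are those in $S_K$ together with those controlled by Lemma~\ref{lm:redone} or Lemma~\ref{vexc}. Taking logs of the product bound on $S_K$ supplied by the preceding theorem, and combining with the product bounds from the previous step, one obtains
$$p \ll \log \log N_E + c^{n_K}\bigl(R_K \cdot n_K^{r_K} + h_K^2 (\log \Delta_K)^2\bigr).$$
Substituting this back into the estimates on $\ell$ and on $\prod \ell$, and using that $\exp(c^{n_K}(R_K n_K^{r_K} + h_K \log \Delta_K))$ dominates every $K$-dependent polylog tail (after possibly enlarging the absolute constant $c$), gives the two bounds claimed in the theorem.

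The main obstacle is the bookkeeping. Several nested applications of Chebotarev and Lemma~\ref{boot} must be composed while keeping $K$-dependence separated from $N_E$-dependence, and the polynomial-in-$p$ tails from Lemmas~\ref{lm:redone} and \ref{vexc} must be simplified cleanly into the stated form; in particular one has to verify that in the product bound the exponent $13$ on $\log N_E$ (rather than $14$) is achievable once $p$ is replaced by its explicit upper bound. There is no genuinely new ingredient here beyond the bound on $S_K$; the difficulty is entirely in the careful explicit estimation.
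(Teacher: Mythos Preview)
Your proposal is correct and follows essentially the same approach as the paper: rerun the proof of Theorem~\ref{effective} with all $K$-dependent constants made explicit via Corollary~\ref{cor:ceb} and the bound on $S_K$, then observe that the $S_K$ contribution dominates every other $K$-dependent term. The paper's own proof is in fact just this two-sentence summary, so your more detailed sketch of the bookkeeping (including flagging the drop from exponent $14$ to $13$) is entirely appropriate.
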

\begin{proof}

This follows from carefully keeping track of the contributions
depending on $K$ in the proof of Theorem~\ref{ithm:effective}.
It is easy to see that the contributions from the bounds given
on the set $S_K$ dominate all other contributions coming from the field $K$.
\end{proof}

\bibliography{surj-bib}{}
\bibliographystyle{plain}

\end{document}